\def\mytitle#1{\setcounter{equation}{0}
\setcounter{footnote}{0}
\begin{center}\Large\textbf{#1}\end{center}
\vspace{0.25cm}}
\def\myname#1{\centerline{{\large #1}}\vspace{-0.13cm}}
\newtheorem{theorem}{Theorem}[section]
\newtheorem{corollary}[theorem]{Corollary}
\newtheorem{definition}[theorem]{Definition}
\newtheorem{lemma}[theorem]{Lemma}
\newenvironment{proof}[1][Proof]{\noindent\textbf{#1: }}{\hspace{\stretch{1}}\rule{0.5em}{0.5em}}
\begin{document}
\mytitle{On Clique Convergences of Graphs }
\myname{S. M. Hegde, V. V. P. R. V. B. Suresh Dara}

\begin{center}
Department of Mathematical and Computational Sciences, \\National Institute of Technology Karnataka, \\Surathkal, Mangalore-575025
\\
{\em E-mail:} smhegde@nitk.ac.in, suresh.dara@gmail.com
\end{center}

\begin{abstract}
Let $G$ be a graph and $\mathcal{K}_G$ be the set of all cliques of $G$, then the clique graph of G denoted by $K(G)$ is the graph with vertex set $\mathcal{K}_G$ and two elements $Q_i,Q_j \in \mathcal{K}_G$ form an edge if and only if $Q_i \cap Q_j \neq \emptyset$. Iterated clique graphs are defined by $K^0(G)=G$, and $K^n(G)=K(K^{n-1}(G))$ for $n>0$. In this paper we determine the number of cliques in $K(G)$ when $G=G_1+G_2$, prove a necessary and sufficient condition for a clique graph $K(G)$ to be complete when $G=G_1+G_2$, give a characterization for clique convergence of the join of graphs and if $G_1$, $G_2$ are Clique-Helly graphs different from $K_1$ and $G=G_1 \Box G_2$, then $K^2(G) = G$.
\end{abstract}

\textbf{Keywords:} Maximal clique, Clique Graph, Graph Operator.

\textbf{2010 Mathematics Subject Classification:} 05C69, 05C76, 37E15, 94C15.

\section{Introduction}

Given a simple graph $G = (V, E)$, not necessarily finite, a clique in $G$ is a maximal complete subgraph in $G$. Let $G$ be a graph and $\mathcal{K}_G$ be the set of all cliques of $G$, then the clique graph operator is denoted by $K$ and the clique graph of $G$ is denoted by $K(G)$. Where $K(G)$ is the graph with vertex set $\mathcal{K}_G$ and two elements $Q_i,Q_j \in \mathcal{K}_G$ form an edge if and only if $Q_i \cap Q_j \neq \emptyset$. Clique graph was introduced by Hamelink in 1968 \cite{hamelink1968partial}.  Iterated clique graphs are defined by $K^0 (G)=G$, and $K^{n} (G)=K(K^{n-1} (G))$ for $n>0$(see \cite{hedetniemi1972line, prisner1995graph, szwarcfiter2003survey}). 

\begin{definition}\label{NCD1}
A graph $G$ is said to be $K$-periodic if there exists a positive integer $n$ such that $G \cong K^n(G)$ and the least such integer is called the $K$-periodicity of $G$, denoted $K$-per $(G)$.
\end{definition}

\begin{definition}\label{NCD2}
A graph $G$ is said to be $K$-Convergent if $\{ K^{n}(G): n \in \mathbb{N}\}$ is finite, otherwise $G$ is $K$-Divergent (see \cite{neumann1978clique}).
\end{definition}

\begin{definition}\label{NCD3}
A graph $H$ is said to be $K$-root of a graph $G$ if $K(H) = G$. 
\end{definition}

If $G$ is a clique graph then one can observe that, the set of all $K$- roots of $G$ is either empty or infinite.

%

\begin{definition}\label{NCD5}\cite{prisner1995graph}
A graph $G$ is a Clique-Helly Graph if the set of cliques has the Helly-Property. That is, for every family of pairwise intersecting cliques of the graph, the total intersection of all these cliques should be non-empty also.
\end{definition}

\begin{definition}\label{NCD6}
Let $G_1=(V_1, E_1)$, $G_2=(V_2, E_2)$ be two graphs. Then their join $G_1 + G_2$ is obtained from the disjoint union by adding all possible edges between vertices of $G_1$ and $G_2$.
\end{definition}

\begin{definition}
The Cartesian product of two graphs $G$ and $H$, denoted $G\Box H$, is a graph with vertex set
$V(G \Box H) = V(G) \times V(H)$,i.e., the set $\{(g,h) | g \in G,h \in H\}$. The edge set of $G\Box H$ consists of all pairs $[(g_1,h_1), (g_2,h_2)]$ of vertices with $[g_1, g_2] \in E(G)$ and $h_1 = h_2$, or $g_1 = g_2$ and $[h_1, h_2] \in E(H)$ (see \cite{imrich2008topics} page no 3).
\end{definition}

In this paper we determine the number of cliques in $K(G)$ when $G=G_1+G_2$, prove a necessary and sufficient condition for a clique graph $K(G)$ to be complete when $G=G_1+G_2$,give a characterization for clique convergence of the join of graphs and if $G_1$, $G_2$ are Clique-Helly graphs different from $K_1$ and $G=G_1 \Box G_2$, then $K^2(G) = G$.

\section{Results}

ne can observe that the clique graph of a complete graph and star graph are always complete. Let $G$ be a graph with $n$ vertices and having a vertex of degree $n-1$, then the clique graph of $G$ is also complete.

\begin{theorem}\label{cgl1}
Let $G_1$, $G_2$ be two graphs and $G=G_1+G_2$, then $X$ is a clique in $G_1$ and $Y$ is clique in $G_2$ if and only if $X+Y$ is a clique in $G_1+G_2$.

\end{theorem}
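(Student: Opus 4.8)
The plan is to prove the biconditional by establishing both directions through a careful analysis of what complete subgraphs of $G_1+G_2$ look like, using the fact that in the join every vertex of $G_1$ is adjacent to every vertex of $G_2$.

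First I would record the basic observation that a vertex set $S \subseteq V(G_1) \sqcup V(G_2)$ induces a complete subgraph of $G_1+G_2$ if and only if $S \cap V(G_1)$ induces a complete subgraph of $G_1$ and $S \cap V(G_2)$ induces a complete subgraph of $G_2$; the cross-edges are automatic from the definition of the join, so the only constraints are within each factor. This reduces everything to bookkeeping about maximality.

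For the forward direction, suppose $X$ is a clique in $G_1$ and $Y$ is a clique in $G_2$. By the observation, $X+Y$ (i.e.\ the vertex set $X \cup Y$) induces a complete subgraph of $G_1+G_2$. To see it is maximal, take any vertex $v \notin X \cup Y$; then either $v \in V(G_1)\setminus X$, in which case by maximality of $X$ in $G_1$ there is some $x \in X$ non-adjacent to $v$ (and $x,v$ remain non-adjacent in the join, since new edges only go across the parts), so $v$ cannot be added; or symmetrically $v \in V(G_2)\setminus Y$. Hence $X+Y$ is a clique in $G_1+G_2$. For the converse, let $Q$ be a clique in $G_1+G_2$ and set $X = Q \cap V(G_1)$, $Y = Q \cap V(G_2)$, so $Q = X+Y$ with $X$ complete in $G_1$ and $Y$ complete in $G_2$ by the observation. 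If $X$ were not maximal in $G_1$, there would be $x \in V(G_1)\setminus X$ adjacent in $G_1$ to all of $X$; but $x$ is also adjacent to all of $Y$ in the join, so $Q \cup \{x\}$ would be complete, contradicting maximality of $Q$. Thus $X$ is a clique in $G_1$, and symmetrically $Y$ is a clique in $G_2$.

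The main (very mild) obstacle is a degenerate case: if one of the factors, say $G_2$, is empty then $G_1+G_2 = G_1$ and the statement should be read with the convention that the empty graph has the empty set as its unique clique; I would note this convention explicitly, or simply assume $G_1,G_2$ are nonempty as is implicit throughout the paper. Beyond that, the argument is a direct translation between maximal complete subgraphs of the join and pairs of maximal complete subgraphs of the factors, and no real computation is needed.
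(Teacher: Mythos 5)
Your proposal is correct and follows essentially the same route as the paper: both directions reduce to the observation that cross-edges in the join are automatic, the forward direction checks that no vertex outside $X\cup Y$ can be adjoined (the paper phrases this as a contradiction with a larger clique $Q$), and the converse decomposes a clique $Q$ as $(Q\cap V(G_1))+(Q\cap V(G_2))$ and verifies maximality of each part. Your explicit handling of the degenerate empty-factor case is a small addition the paper glosses over (it instead separately argues $Q\cap G_1\neq\emptyset$ and $Q\cap G_2\neq\emptyset$), but the substance is the same.
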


\begin{proof}
Let  $G=G_1+G_2$ and $X$ be a clique in $G_1$ and $Y$ be a clique in $G_2$. Suppose that $X+Y$ is not a maximal complete subgraph in $G_1+G_2$, then there is a maximal complete subgraph (clique) $Q$ in $G_1+G_2$ such that $X+Y$ is a proper subgraph of $Q$. Since $X+Y$ is a proper subgraph of $Q$, there is a vertex $v$ in $Q$ which is not in $X+Y$ and $v$ is adjacent to every vertex of $X+Y$, then by the definition of $G_1+G_2$, $v$ should be in either $G_1$ or $G_2$. Suppose $v$ is in $G_1$, then the induced subgraph of $V(X) + \{v\}$ is complete in $G_1$, which is a contradiction as $X$ is maximal. Therefore $X+Y$ is the maximal complete subgraph (clique) in $G_1+G_2$.

Conversely, let $Q$ is a clique in $G_1+G_2$. Suppose that $Q \neq X+Y$ where $X$ is a clique in $G_1$ and $Y$ is a clique in $G_2$. If $Q \cap G_1 = \emptyset$, then $Q$ is a subgraph of $G_2$. This implies that $Q$ is a clique in $G_2$ as $Q$ is a clique in $G$. Let $v$ be a vertex of $G_1$. Then by the definition of $G_1+G_2$, one can observe that the induced subgraph of $V(Q) \cup \{v\}$ is complete in $G$, which is a contradiction as $Q$ is a maximal complete subgraph. Therefore $Q \cap G_1 \neq \emptyset$. Similarly we can prove that $Q \cap G_2 \neq \emptyset$. Let $X$ be the induced subgraph of $G$ with vertex set $V(Q) \cap V(G_1) $ and $Y$ be the induced subgraph of $G$ with vertex set $V(Q) \cap V(G_2)$, then $Q=X+Y$. Since $Q$ is a maximal complete subgraph of $G$, $X$ and $Y$ should be maximal complete subgraphs in $G_1$ and $G_2$ respectively. Otherwise, if $X$ is not a maximal complete subgraph in $G_1$ then there is a maximal complete subgraph $X'$ in $G_1$ such that $X$ is subgraph of $X'$, and this implies that $X+Y$ is a subgraph of $X'+Y$ and $X'+Y$ is complete, which is a contradiction. Therefore $X$ and $Y$ are maximal complete subgraphs (cliques) in $G_1$ and $G_2$ respectively.
\end{proof}

\begin{corollary}\label{cgl3}
Let $G_1$, $G_2$ be two graphs and $G=G_1 + G_2$. If $n$, $m$ are the number of cliques in $G_1$, $G_2$ respectively, then $G$ has $nm$ cliques.
\end{corollary}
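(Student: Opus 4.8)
The plan is to read the statement off directly from Theorem \ref{cgl1} by exhibiting an explicit bijection between $\mathcal{K}_G$ and $\mathcal{K}_{G_1} \times \mathcal{K}_{G_2}$. I would define $\Phi(X,Y) = X+Y$ for $X \in \mathcal{K}_{G_1}$ and $Y \in \mathcal{K}_{G_2}$. The forward direction of Theorem \ref{cgl1} guarantees that $\Phi$ takes values in $\mathcal{K}_G$, and its converse direction shows that every clique $Q$ of $G$ is of the form $X+Y$ with $X$ the subgraph of $G_1$ induced by $V(Q)\cap V(G_1)$ and $Y$ the subgraph of $G_2$ induced by $V(Q)\cap V(G_2)$; hence $\Phi$ is surjective.

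For injectivity I would use that the join is built from the \emph{disjoint} union of $G_1$ and $G_2$, so $V(G_1)\cap V(G_2)=\emptyset$, and therefore for any cliques $X,X' \in \mathcal{K}_{G_1}$ and $Y,Y' \in \mathcal{K}_{G_2}$ one has $V(X+Y)\cap V(G_1)=V(X)$ and $V(X+Y)\cap V(G_2)=V(Y)$. Thus $X+Y = X'+Y'$ forces $V(X)=V(X')$ and $V(Y)=V(Y')$, so $(X,Y)=(X',Y')$. Consequently $\Phi$ is a bijection and $|\mathcal{K}_G| = |\mathcal{K}_{G_1}|\cdot|\mathcal{K}_{G_2}| = nm$, which is the claim.

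There is essentially no obstacle in this corollary; the only point needing a moment's attention is the bookkeeping that a clique of $G_1+G_2$ decomposes uniquely along the partition of its vertex set into the $G_1$-part and the $G_2$-part, and this is immediate once one records that $V(G_1)$ and $V(G_2)$ are disjoint. All the substantive work has already been carried out in Theorem \ref{cgl1}.
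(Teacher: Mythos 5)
Your proof is correct and follows essentially the same route as the paper: the paper also deduces from Theorem \ref{cgl1} that $\mathcal{K}_G = \{X_i + Y_j : 1 \leq i \leq n,\ 1 \leq j \leq m\}$ and counts $nm$ cliques. The only difference is that you explicitly verify injectivity of the correspondence via the disjointness of $V(G_1)$ and $V(G_2)$, a point the paper leaves implicit; this is a harmless (indeed slightly more careful) elaboration rather than a different argument.
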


\begin{proof}
Let $G=G_1+G_2$, $\mathcal{K}_{G_1}=\{X_1, X_2, \dots, X_n\}$ be the set of all cliques of $G_1$ and $\mathcal{K}_{G_2}=\{Y_1, Y_2, \dots, Y_m\}$ be the set of all cliques of $G_2$. Then by Theorem \ref{cgl1} it follows that $\mathcal{K}_G = \{X_i+Y_j: 1\leq i \leq n, 1 \leq j \leq m \}$ is the set of all cliques of $G$. Since $G_1$ has $n$, $G_2$ has $m$ number of cliques, $G_1+G_2$ has $nm$ number of cliques.
\end{proof}

In the following result we give a necessary and sufficient condition for a clique graph $K(G)$ to be complete when $G=G_1+G_2$.

\begin{theorem}\label{cgt2}
Let $G_1$, $G_2$ be two graphs. If $G=G_1+ G_2$, then $K(G)$ is complete if and only if either $K(G_1)$ is complete or $K(G_2)$ is complete.
\end{theorem}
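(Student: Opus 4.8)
The plan is to reduce everything to the explicit description of the cliques of $G=G_1+G_2$ provided by Theorem \ref{cgl1}. Write $\mathcal{K}_{G_1}=\{X_1,\dots,X_n\}$ and $\mathcal{K}_{G_2}=\{Y_1,\dots,Y_m\}$; by Theorem \ref{cgl1} the cliques of $G$ are precisely the subgraphs $X_i+Y_j$ with $1\le i\le n$ and $1\le j\le m$. Since $K(G)$ is complete exactly when every two cliques of $G$ share a vertex, the whole statement comes down to deciding when $(X_i+Y_j)\cap(X_{i'}+Y_{j'})=\emptyset$.

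First I would record the elementary observation that, because $V(G_1)$ and $V(G_2)$ are disjoint in the join, the vertex set of $(X_i+Y_j)\cap(X_{i'}+Y_{j'})$ equals $\bigl(V(X_i)\cap V(X_{i'})\bigr)\cup\bigl(V(Y_j)\cap V(Y_{j'})\bigr)$. Hence these two cliques of $G$ intersect if and only if $X_i\cap X_{i'}\neq\emptyset$ or $Y_j\cap Y_{j'}\neq\emptyset$. This single criterion drives both directions.

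For the ``if'' direction, assume without loss of generality that $K(G_1)$ is complete, i.e.\ $X_i\cap X_{i'}\neq\emptyset$ for all $i,i'$. Then the criterion above is met for every choice of $i,i',j,j'$, so any two cliques of $G$ intersect and $K(G)$ is complete. For the ``only if'' direction I would argue by contraposition: if neither $K(G_1)$ nor $K(G_2)$ is complete, pick cliques $X_i,X_{i'}$ of $G_1$ with $X_i\cap X_{i'}=\emptyset$ and cliques $Y_j,Y_{j'}$ of $G_2$ with $Y_j\cap Y_{j'}=\emptyset$; these pairs are automatically distinct since a clique always meets itself, so $X_i+Y_j$ and $X_{i'}+Y_{j'}$ are two distinct cliques of $G$, and by the criterion their intersection is empty, whence $K(G)$ is not complete.

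There is no substantial obstacle here; the only points needing a little care are the vertex-set computation for the intersection (which relies on $G_1$ and $G_2$ being vertex-disjoint in the join) and the small remark, used in the last step, that failure of $K(G_i)$ to be complete forces the existence of \emph{two distinct} cliques with empty intersection, so that the constructed pair in $G$ is genuinely a pair of distinct cliques.
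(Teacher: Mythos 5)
Your proof is correct and follows essentially the same route as the paper: both reduce the problem to the description of the cliques of $G_1+G_2$ as $X_i+Y_j$ from Theorem \ref{cgl1}, prove the ``only if'' direction by exhibiting two disjoint cliques $X+Y$ and $X'+Y'$ when neither $K(G_1)$ nor $K(G_2)$ is complete, and prove the ``if'' direction by noting that completeness of $K(G_1)$ forces every pair $X_i+Y_j$, $X_{i'}+Y_{j'}$ to meet inside $G_1$. Your explicit intersection formula $\bigl(V(X_i)\cap V(X_{i'})\bigr)\cup\bigl(V(Y_j)\cap V(Y_{j'})\bigr)$ is a slightly cleaner packaging of the same observation the paper uses implicitly.
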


\begin{proof}
Let $G=G_1+ G_2$ and $K(G)$ be complete. Suppose that neither $K(G_1)$ nor $K(G_2)$ is complete, then there exist two cliques $X, X'$ in $G_1$ and two cliques $Y, Y'$ in $G_2$ such that $X \cap X' = \emptyset$ and $Y \cap Y'=\emptyset$. By Theorem \ref{cgl1} it follows that $X+Y, X'+Y'$ are cliques in $G$. Since $X \cap X'$ and $Y \cap Y'$ are empty, it follows that $\{X+Y\} \cap \{X'+Y'\} = \emptyset $, which is a contradiction as $K(G)$ is complete.

Conversely, suppose that $K(G_1)$ is complete and $\mathcal{K}_{G_1}=\{X_1, X_2, \dots, X_n\}$, $\mathcal{K}_{G_2}=\{Y_1, Y_2, \dots, Y_m\}$. By Corollary \ref{cgl3}, it follows that $G$ has exactly $nm$ number of cliques. Let $\mathcal{K}_{G}=\{Q_{ij}:Q_{ij}=X_i+Y_j$ for $ i=1, 2, \dots, n; j=1, 2, \dots, m\}$ be the set of all cliques of $G$. Then $Q$ is the vertex set of $K(G)$. Arranging the elements of $\mathcal{K}_{G}$ in the matrix form $M=[m_{ij}]$ where $m_{ij}=Q_{ij}$, we have

$M=\left(
  \begin{array}{ccccc}
    Q_{11} & Q_{12} & Q_{13} & \dots & Q_{1m} \\
    Q_{21} & Q_{22} & Q_{23} & \dots & Q_{2m} \\
    \vdots & \vdots & \vdots & \ddots & \vdots \\
    Q_{n1} & Q_{n2} & Q_{n3} & \dots & Q_{nm} \\
  \end{array}
\right)$.

Let $Q_{ij}$, $Q_{kl}$ be any two elements in $M$.  Since $Q_{ij}=X_i+Y_j$, $Q_{kl}=X_k+Y_l$, it follows that $X_i$, $X_k$ are cliques in $G_1$. Since $K(G_1)$ is complete, $X_i \cap X_k \neq \emptyset$ and then $Q_{ij} \cap Q_{kl} \neq \emptyset$. Therefore $Q_{ij}$, $Q_{kl}$ are adjacent in $K(G)$. Hence $K(G)$ is complete.
\end{proof}

\begin{lemma}\label{cgl4}
Let $G_1$, $G_2$ be two graphs and $G=G_1+G_2$. If $K(G_1)$, $K(G_2)$ are not complete, then for every clique in $K(G_1)$ there is a clique in $K(G)$.
\end{lemma}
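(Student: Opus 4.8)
The plan is to turn the statement into a claim about the cliques of $G_1$ and $G_2$ and then build, for each clique $\mathcal{C}$ of $K(G_1)$, a distinguished clique $\mathcal{Q}$ of $K(G)$ in such a way that $\mathcal{C}$ can be read back off from $\mathcal{Q}$. First I would record the adjacency rule in $K(G)$: by Theorem~\ref{cgl1} every clique of $G=G_1+G_2$ is of the form $X+Y$ with $X\in\mathcal{K}_{G_1}$ and $Y\in\mathcal{K}_{G_2}$, and since $V(G_1)$ and $V(G_2)$ are disjoint,
\[
V(X+Y)\cap V(X'+Y')=\bigl(V(X)\cap V(X')\bigr)\cup\bigl(V(Y)\cap V(Y')\bigr),
\]
so $X+Y$ and $X'+Y'$ are adjacent in $K(G)$ exactly when $X\cap X'\neq\emptyset$ or $Y\cap Y'\neq\emptyset$.

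Given a clique $\mathcal{C}$ of $K(G_1)$, i.e.\ a maximal family of pairwise intersecting cliques of $G_1$, set $\mathcal{Q}_0=\{\,X+Y:\ X\in\mathcal{C},\ Y\in\mathcal{K}_{G_2}\,\}$. Any two members of $\mathcal{Q}_0$ already meet inside $G_1$, so by the adjacency rule $\mathcal{Q}_0$ spans a complete subgraph of $K(G)$; extending it to a maximal complete subgraph $\mathcal{Q}$ of $K(G)$ produces a clique of $K(G)$, which settles existence. To make the correspondence $\mathcal{C}\mapsto\mathcal{Q}$ meaningful (injective), I would prove
\[
\mathcal{C}=\{\,X\in\mathcal{K}_{G_1}:\ X+Y\in\mathcal{Q}\ \text{for every}\ Y\in\mathcal{K}_{G_2}\,\}.
\]
The inclusion ``$\subseteq$'' holds because $\mathcal{Q}\supseteq\mathcal{Q}_0$. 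For ``$\supseteq$'', take $X\notin\mathcal{C}$; since $K(G_1)$ is not complete we have $\mathcal{C}\neq\mathcal{K}_{G_1}$, so maximality of $\mathcal{C}$ gives some $X_a\in\mathcal{C}$ with $X\cap X_a=\emptyset$, and since $K(G_2)$ is not complete there are cliques $Y,Y'$ of $G_2$ with $Y\cap Y'=\emptyset$. If $X+Y$ and $X+Y'$ both belonged to $\mathcal{Q}$, then $X+Y$ and $X_a+Y'\in\mathcal{Q}_0\subseteq\mathcal{Q}$ would be adjacent in $K(G)$, forcing $(X\cap X_a)\cup(Y\cap Y')\neq\emptyset$, a contradiction; hence some $X+Y\notin\mathcal{Q}$. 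Thus distinct cliques of $K(G_1)$ yield distinct cliques of $K(G)$.

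The adjacency translation and the verification that $\mathcal{Q}_0$ is complete are routine; the step that really uses both hypotheses — and the one I expect to require the most care — is the recovery identity, where one must rule out that the (non-canonical) maximal extension $\mathcal{Q}$ quietly absorbs some $X+Y$ with $X\notin\mathcal{C}$ for all $Y$. Playing a disjoint pair $Y,Y'$ of cliques of $G_2$ against a clique $X_a\in\mathcal{C}$ disjoint from $X$ is precisely what ``$K(G_1)$ and $K(G_2)$ not complete'' provides, so that part should be written out carefully as the heart of the argument.
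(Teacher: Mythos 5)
Your proposal is correct, and although it begins exactly as the paper does, it diverges at the decisive step in a way that is worth spelling out. Both arguments form the same set $\mathcal{Q}_0=\{X+Y:\ X\in\mathcal{C},\ Y\in\mathcal{K}_{G_2}\}$ and check it is complete in $K(G)$. The paper then claims this set is itself already maximal: given a vertex $X_1+Y_1$ adjacent to all of $\mathcal{Q}_0$, it invokes ``since $K(G_2)$ is not complete there exists $Y_2$ not adjacent to $Y_1$.'' That inference is not justified, since non-completeness of $K(G_2)$ only provides \emph{some} non-adjacent pair, not one involving the particular $Y_1$ at hand; and in fact $\mathcal{Q}_0$ need not be maximal. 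For $G_1=G_2=P_4$ with cliques $X_1,X_2,X_3$ and $Y_1,Y_2,Y_3$ along the path, and $\mathcal{C}=\{X_1,X_2\}$, the vertex $X_3+Y_2$ (with $Y_2$ the middle clique, which meets every clique of $G_2$) is adjacent to every member of $\mathcal{Q}_0$. Your version sidesteps this entirely: you extend $\mathcal{Q}_0$ to an arbitrary maximal clique $\mathcal{Q}$, which settles existence, and then prove the recovery identity $\mathcal{C}=\{X:\ X+Y\in\mathcal{Q}\ \text{for all}\ Y\}$ by playing a disjoint pair $Y,Y'$ of cliques of $G_2$ against a clique $X_a\in\mathcal{C}$ disjoint from $X$; this argument is sound (indeed the single pair $X+Y$, $X_a+Y'$ already forces $X+Y\notin\mathcal{Q}$), it delivers the injectivity that the subsequent Lemmas and Theorem actually use, and it survives precisely the configurations where the direct maximality claim fails. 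The one caveat is that your clique $\mathcal{Q}$ may properly contain $\mathcal{Q}_0$, so any later statement that quotes the exact vertex set of the clique associated with $\mathcal{C}$ would need to be phrased in terms of containment of $\mathcal{Q}_0$ rather than equality with it.
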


\begin{proof}
Let $G=G_1+G_2$ be a graph such that $K(G_1)$ and $K(G_2)$ are not complete. Let $V(K(G_1))=\{X_i: X_i ${ is a clique in }$ G_1, 1\leq i \leq n \}$ and $V(K(G_2))=\{Y_j: Y_j $t{ is a clique in }$ G_2, 1\leq j \leq m \}$, then by Theorem \ref{cgl1} it follows that  $V(K(G))=\{X_i+Y_j: 1\leq i \leq n, 1\leq j \leq m\}$. Let $Q$ be a clique of size $l$ in $K(G_1)$ and $V(Q)=\{X_{Q_1}, X_{Q_2}, \dots, X_{Q_l}\}$ where $X_{Q_i}$ is a clique in $G_1$ for $1\leq i \leq l$. Let $A_Q = \{X_{Q_i} + Y_j : 1\leq i \leq l, 1 \leq j \leq m\}$. Then clearly $A_Q$ is subset of $V(K(G))$.

Let $X_{Q_1}+Y_1$, $X_{Q_2}+Y_2$ be two elements in $A_Q$. Since $X_{Q_1}, X_{Q_2}$ are the vertices of the clique $Q$ of $K(G_1)$, we have $X_{Q_1} \cap X_{Q_2} \neq \emptyset$. Therefore $\{X_{Q_1}+Y_1\} \cap \{X_{Q_2}+Y_2\} \neq \emptyset$. Hence the intersection of any two elements in $A_Q$ is nonempty. Then, it follows that the elements of $A_Q$ form a complete subgraph in $K(G)$. Suppose that it is not a maximal complete subgraph in $K(G)$. Then there is a vertex, say $X_1+Y_1$ in $K(G)$ which is not in $A_Q$ and $X_1+Y_1$ is adjacent with every vertex of $A_Q$. Since $K(G_2)$ is not complete there exists a vertex say $Y_2$ in $K(G_2)$ such that $Y_2$ is not adjacent to $Y_1$ in $K(G_2)$. Since $Q$ is a clique in $K(G_1)$ and $K(G_1)$ is not complete, there is a vertex say $X_{Q_1}$ in $V(Q)$ which is not adjacent to $X_1$ in $K(G_1)$. By the definition of $A_Q$ one can see that $X_{Q_1} + Y_2$ is an element of $A_Q$. Therefore $\{X_{Q_1} + Y_2\} \cap \{X_1+Y_1\} = \emptyset$, which is a contradiction. Thus $A_Q$ is a maximal complete subgraph in $K(G)$. Hence for every clique in $K(G_1)$ there is a clique in $K(G)$. 
\end{proof}

Similarly for every clique in $K(G_2)$, there is a clique in $K(G)$.

\begin{lemma}\label{cgl5}
Let $G_1$, $G_2$ be two graphs and $G=G_1+G_2$. If $K(G_1)$, $K(G_2)$ are not complete, then for every clique in $K(G)$ there is a clique, either in $K(G_1)$ or in $K(G_2)$ but not in both.
\end{lemma}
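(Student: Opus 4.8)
Fix a clique $\mathcal{C}$ of $K(G)$. By Theorem~\ref{cgl1} every vertex of $K(G)$ has the form $X+Y$ with $X\in\mathcal{K}_{G_1}$, $Y\in\mathcal{K}_{G_2}$, and since $V(G_1)$ and $V(G_2)$ are disjoint, $(X+Y)\cap(X'+Y')=\emptyset$ exactly when $X\cap X'=\emptyset$ and $Y\cap Y'=\emptyset$. Let $\mathcal{S}=\{X\in\mathcal{K}_{G_1}: X+Y\in\mathcal{C}\text{ for some }Y\}$ and $\mathcal{T}=\{Y\in\mathcal{K}_{G_2}: X+Y\in\mathcal{C}\text{ for some }X\}$ be the two projections of $\mathcal{C}$.

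The crux is the dichotomy: \emph{the cliques in $\mathcal{S}$ pairwise intersect, or the cliques in $\mathcal{T}$ pairwise intersect.} I would argue this by contradiction. Suppose $X_1,X_2\in\mathcal{S}$ with $X_1\cap X_2=\emptyset$ and $Y_1,Y_2\in\mathcal{T}$ with $Y_1\cap Y_2=\emptyset$, and fix members $X_1+Y^{(1)}$, $X_2+Y^{(2)}$, $X^{(1)}+Y_1$, $X^{(2)}+Y_2$ of $\mathcal{C}$. Since $\mathcal{C}$ is a complete subgraph, the observation above forces $Y^{(1)}\cap Y^{(2)}\neq\emptyset$ and $X^{(1)}\cap X^{(2)}\neq\emptyset$. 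I would then examine the four ``corner'' cliques $X_i+Y_j$ ($i,j\in\{1,2\}$) of $G$ and show, using that $K(G_1)$ and $K(G_2)$ are not complete, that at least one of them meets every member of $\mathcal{C}$ and hence belongs to $\mathcal{C}$ by maximality; combining this with the two intersection relations just obtained should yield two members of $\mathcal{C}$ with empty intersection, a contradiction. I expect this to be the main obstacle: it is the only point where the hypothesis on $K(G_1),K(G_2)$ enters, and the case analysis of which corner is forced into $\mathcal{C}$ is delicate, the awkward configuration being a clique of $G_1$ (or $G_2$) that meets every clique of $G_1$ (resp.\ $G_2$).

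Granting the dichotomy, suppose the cliques in $\mathcal{S}$ pairwise intersect. Then for every $X\in\mathcal{S}$ and every $Y\in\mathcal{K}_{G_2}$ the clique $X+Y$ meets each member $X'+Y'$ of $\mathcal{C}$ (because $X\cap X'\neq\emptyset$), so by maximality $X+Y\in\mathcal{C}$; thus $\mathcal{C}=\{X+Y: X\in\mathcal{S},\ Y\in\mathcal{K}_{G_2}\}$. The same reasoning shows that no clique of $G_1$ outside $\mathcal{S}$ can meet every clique in $\mathcal{S}$ (such a clique, joined with any $Y\in\mathcal{K}_{G_2}$, would be adjacent to all of $\mathcal{C}$ yet outside it), so $\mathcal{S}$ is a maximal complete subgraph of $K(G_1)$, i.e.\ a clique of $K(G_1)$, and $\mathcal{C}$ is exactly the clique $A_{\mathcal{S}}$ of $K(G)$ associated to it in Lemma~\ref{cgl4}. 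Symmetrically, if the cliques in $\mathcal{T}$ pairwise intersect, then $\mathcal{T}$ is a clique of $K(G_2)$ and $\mathcal{C}=\{X+Y: X\in\mathcal{K}_{G_1},\ Y\in\mathcal{T}\}$.

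Finally, these two possibilities exclude each other: if $\{X+Y: X\in\mathcal{S},\,Y\in\mathcal{K}_{G_2}\}=\{X+Y: X\in\mathcal{K}_{G_1},\,Y\in\mathcal{T}\}$, then comparing which cliques of $G_1$ occur as a first summand gives $\mathcal{S}=\mathcal{K}_{G_1}$, and since $\mathcal{S}$ consists of pairwise intersecting cliques in this case, so does $\mathcal{K}_{G_1}$, i.e.\ $K(G_1)$ is complete, contradicting the hypothesis. Hence each clique of $K(G)$ corresponds to a clique of exactly one of $K(G_1)$, $K(G_2)$, which is the claim.
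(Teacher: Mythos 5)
Your instinct about where the difficulty lies is exactly right, but the situation is worse than you suspect: the dichotomy you isolate as the crux --- that the projection $\mathcal{S}$ or the projection $\mathcal{T}$ of a clique $\mathcal{C}$ of $K(G)$ must consist of pairwise intersecting cliques --- is false, and so is the lemma itself. Take $G_1=G_2=P_4$, say $G_1$ on $a,b,c,d$ and $G_2$ on $e,f,g,h$. Write $X_1=\{b,c\}$, $X_2=\{a,b\}$, $X_3=\{c,d\}$ for the cliques of $G_1$ and $Y_1=\{f,g\}$, $Y_2=\{e,f\}$, $Y_3=\{g,h\}$ for those of $G_2$; both clique graphs are $P_3$, hence not complete. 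Among the nine cliques $Q_{ij}=X_i+Y_j$ of $G$, the only non-adjacent pairs in $K(G)$ are $\{Q_{22},Q_{33}\}$ and $\{Q_{23},Q_{32}\}$, so $\mathcal{C}=\{Q_{11},Q_{12},Q_{13},Q_{21},Q_{22},Q_{23},Q_{31}\}$ is a maximal clique of $K(G)$ whose projections are all of $\mathcal{K}_{G_1}$ and all of $\mathcal{K}_{G_2}$ --- neither of which is pairwise intersecting. This is precisely the ``awkward configuration'' you flag: $X_1$ (resp.\ $Y_1$) meets every clique of $G_1$ (resp.\ $G_2$) even though $K(G_1)$ (resp.\ $K(G_2)$) is not complete, and then a vertex such as $Q_{31}=X_3+Y_1$ is adjacent to every other vertex of $K(G)$, so it can be adjoined to the ``two full rows'' set $\{X_iY_j: i\in\{1,2\}\}$ and destroys its maximality. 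No case analysis on the four corner cliques can rescue the argument, because the statement being proved is not true.

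For what it is worth, the paper's own proof founders on the same rock: it asserts without justification that every clique of $K(G)$ is a union of full rows or of full columns (``every clique in $K(G)$ is of size $ln$ or $lm$''), and the companion Lemma \ref{cgl4} deduces from ``$K(G_2)$ is not complete'' that the \emph{particular} clique $Y_1$ has a non-neighbour in $K(G_2)$, which fails exactly when $Y_1$ is the middle edge of $P_4$. So your write-up is more candid about the gap than the paper is, but the gap cannot be closed as stated: the lemma needs a stronger hypothesis (for instance, that no clique of $G_i$ meets all other cliques of $G_i$, i.e.\ that every vertex of $K(G_i)$ has a non-neighbour) for your dichotomy, and hence your remaining argument, to go through; and Lemmas \ref{cgl4}--\ref{cgl6} and Theorem \ref{cgt3} inherit the same problem.
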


\begin{proof}
Let $G=G_1+G_2$ be a graph such that $K(G_1)$ and $K(G_2)$ are not complete. Let $V(K(G_1))=\{X_i: X_i ${ is a clique in }$ G_1, 1\leq i \leq n \}$ and $V(K(G_2))=\{Y_j: Y_j ${ is a clique in }$ G_2, 1\leq j \leq m \}$, then by Theorem \ref{cgl1}, $V(K(G))=\{X_i+Y_j: 1\leq i \leq n, 1\leq j \leq m\}$. One can observe that for every $X_i$ in $V(K(G_1))$, the vertices $X_i+Y_1, X_i+Y_2, \dots, X_i+Y_m$ form a complete graph in $K(G)$, $1 \leq i \leq n$. Similarly, for every $Y_j$ in $V(K(G_2))$, the vertices $X_1+Y_j, X_2+Y_j, \dots, X_n+Y_j$ form a complete graph in $K(G)$, $1 \leq j \leq m$. Therefore every clique in $K(G)$ is of size $ln$ or $lm$.  Let $Q$ be a clique of size $lm$ in $K(G)$ and $V(Q)=\{X_{Q_1}+Y_1, X_{Q_1}+Y_2, \dots, X_{Q_1}+Y_m, X_{Q_2}+Y_1, X_{Q_2}+Y_2, \dots, X_{Q_2}+Y_m, \dots, X_{Q_l}+Y_1, X_{Q_l}+Y_2, \dots X_{Q_l}+Y_m\}$ where $X_{Q_i}$, for $1\leq i \leq l$ is the clique in $G_1$. Define $A_Q = \{X_{Q_i} : 1\leq i \leq l\}$. Clearly $A_Q$ is a subset of $V(K(G_1))$.

Let $X_{Q_1}$ and $X_{Q_2}$ be two elements of $A_Q$. Since $K(G_2)$ is not complete, there exists a vertex, say $Y_2$ in $K(G_2)$ such that $Y_2$ is not adjacent to $Y_1$ in $K(G_2)$, this implies that $Y_1 \cap Y_2 = \emptyset$. Since $X_{Q_1}+Y_1, X_{Q_2}+Y_2$ are the vertices of the clique $Q$ of $G$, $\{X_{Q_1}+Y_1\} \cap \{X_{Q_2}+Y_2\} \neq \emptyset$. Therefore $X_{Q_1} \cap X_{Q_2} \neq \emptyset$. Hence the intersection of any two elements in $A_Q$ is nonempty. It follows that the elements of $A_Q$ form a complete subgraph in $K(G_1)$. Suppose that it is not a maximal complete subgraph in $K(G_1)$, then there is a vertex say $X_1$ in $K(G)$ which is not in $A_Q$ and $X_1$ is adjacent with every vertex in $A_Q$, this implies that $X_1 \cap X_{Q_i} \neq \emptyset$, $1\leq i \leq l$. Since $X_1$ is not in $A_Q$, the vertex $X_1 + Y_1 $ is not in $Q$. Since $X_1$ is adjacent with every element in $A_Q$, $\{X_1+Y_1\} \cap \{X_{Q_i}+Y_j\} \neq \emptyset$ for every $i, j$, $1\leq i \leq l$, $1 \leq j \leq m$. This implies that the vertex $X_1+ Y_1$ is adjacent to every vertex of $Q$ in $K(G)$, which is a contradiction as $Q$ is maximal in $K(G)$. Therefore the elements of $A_Q$ form a maximal complete subgraph (clique) in $K(G_1)$. Hence for every clique of size $lm$ in $K(G)$ there is a clique of size $l$ in $K(G_1)$. Similarly we can prove that if the clique in $K(G)$ is of size $ln$, then there is a clique of size $l$ in $K(G_2)$.
\end{proof}

\begin{lemma}\label{cgl6}
Let $G_1$, $G_2$ be two graphs and $G=G_1+G_2$. If $K(G_1)$, $K(G_2)$ are not complete, then the number of cliques in $K(G)$ is the sum of the number of cliques in $K(G_1)$ and $K(G_2)$.
\end{lemma}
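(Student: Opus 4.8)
The plan is to upgrade Lemmas \ref{cgl4} and \ref{cgl5} into an explicit bijection between $\mathcal{K}_{K(G)}$ and the disjoint union $\mathcal{K}_{K(G_1)} \sqcup \mathcal{K}_{K(G_2)}$. First I would carry over the notation of the previous lemmas: write $V(K(G_1)) = \{X_1, \dots, X_n\}$ and $V(K(G_2)) = \{Y_1, \dots, Y_m\}$, so that $V(K(G)) = \{X_i + Y_j : 1 \le i \le n,\ 1 \le j \le m\}$ by Theorem \ref{cgl1}. For a clique $Q$ of $K(G_1)$ of size $l$, with $V(Q) = \{X_{Q_1}, \dots, X_{Q_l}\}$, Lemma \ref{cgl4} gives that $A_Q = \{X_{Q_i} + Y_j : 1 \le i \le l,\ 1 \le j \le m\}$ is a clique of $K(G)$; symmetrically, for a clique $R$ of $K(G_2)$ of size $l$ with $V(R) = \{Y_{R_1},\dots,Y_{R_l}\}$, the set $B_R = \{X_i + Y_{R_j} : 1 \le i \le n,\ 1 \le j \le l\}$ is a clique of $K(G)$. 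I would then observe that $Q \mapsto A_Q$ and $R \mapsto B_R$ are injective, since the set of $G_1$-components occurring among the vertices of $A_Q$ is exactly $V(Q)$ (and likewise $B_R$ recovers $V(R)$ from its $G_2$-components), so $A_Q = A_{Q'}$ forces $Q = Q'$.

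Next I would verify that the two families $\{A_Q : Q \in \mathcal{K}_{K(G_1)}\}$ and $\{B_R : R \in \mathcal{K}_{K(G_2)}\}$ are disjoint and that together they exhaust $\mathcal{K}_{K(G)}$. Disjointness: if $A_Q = B_R$, then the set of $G_1$-components of its vertices would be simultaneously $V(Q)$ and all of $V(K(G_1))$; but $V(K(G_1))$ is not a clique of $K(G_1)$ because $K(G_1)$ is not complete, so $V(Q) \subsetneq V(K(G_1))$, a contradiction. Exhaustiveness and uniqueness of the preimage are exactly the content of Lemma \ref{cgl5}: every clique of $K(G)$ has size $lm$ or $ln$ for a suitable $l$, a clique of size $lm$ equals $A_Q$ for a (unique, by the previous paragraph) clique $Q$ of size $l$ in $K(G_1)$, and a clique of size $ln$ equals $B_R$ for a clique $R$ of size $l$ in $K(G_2)$, with no clique falling into both classes.

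Combining these, $\mathcal{K}_{K(G)}$ is partitioned as $\{A_Q : Q \in \mathcal{K}_{K(G_1)}\} \sqcup \{B_R : R \in \mathcal{K}_{K(G_2)}\}$, each block being in bijection with $\mathcal{K}_{K(G_1)}$, respectively $\mathcal{K}_{K(G_2)}$; hence the number of cliques of $K(G)$ equals the number of cliques of $K(G_1)$ plus the number of cliques of $K(G_2)$. The only delicate point — and the main obstacle — is making the disjointness/non-overlap argument fully rigorous, i.e.\ ruling out that a single clique of $K(G)$ could be realized both as an $A_Q$ and as a $B_R$; this rests squarely on the hypothesis that \emph{both} $K(G_1)$ and $K(G_2)$ fail to be complete, so that neither $V(K(G_1))$ nor $V(K(G_2))$ is itself a clique. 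Everything else is routine bookkeeping on top of Lemmas \ref{cgl4} and \ref{cgl5}.
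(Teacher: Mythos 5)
Your proposal is correct and follows the same route as the paper, whose entire proof of this lemma is the single sentence that it ``follows by Lemmas \ref{cgl4} and \ref{cgl5}''; you have simply supplied the bookkeeping (injectivity of $Q\mapsto A_Q$, disjointness of the two families via the non-completeness hypothesis, and exhaustiveness from Lemma \ref{cgl5}) that the paper leaves implicit. No gaps.
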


\begin{proof}
Proof of this Lemma follows by the Lemmas \ref{cgl4} and \ref{cgl5}.
\end{proof}

By the definition of a clique graph, cliques of $K(G)$ are the vertices of $K^2(G)$. By Lemmas \ref{cgl4}, \ref{cgl5} and \ref{cgl6} it follows that there is a one to one correspondence between $V(K^2(G))$ and $V(K^2(G_1)) \cup V(K^2(G_2))$ where $G=G_1+G_2$. By the definition of a clique graph, cliques of $G$ are the vertices of $K(G)$. By Corollary \ref{cgl3} it follows that, if $|V(K(G_1))| = n$ and $|V(K(G_2))| = m$ then $|V(K(G))| =nm$. Therefore $K(G) \neq K(G_1) +K(G_2)$.

\begin{theorem}\label{cgt3}
Let $G_1$, $G_2$ be two graphs and $G=G_1+G_2$. If $K(G_1)$, $K(G_2)$ are not complete, then $K^2(G)=K^2(G_1)+K^2(G_2)$.
\end{theorem}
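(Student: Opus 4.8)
The plan is to build directly on Lemmas \ref{cgl4}, \ref{cgl5} and \ref{cgl6}. Write $V(K(G_1))=\{X_1,\dots,X_n\}$, $V(K(G_2))=\{Y_1,\dots,Y_m\}$, so that by Theorem \ref{cgl1} we have $V(K(G))=\{X_i+Y_j:1\le i\le n,\ 1\le j\le m\}$. Lemma \ref{cgl4} attaches to each clique $Q$ of $K(G_1)$, say with $V(Q)=\{X_{Q_1},\dots,X_{Q_l}\}$, the clique $A_Q=\{X_{Q_i}+Y_j:1\le i\le l,\ 1\le j\le m\}$ of $K(G)$, and symmetrically each clique $R$ of $K(G_2)$ gives a clique $B_R$ of $K(G)$; Lemma \ref{cgl5} says every clique of $K(G)$ is of the form $A_Q$ or $B_R$ but not both (this is exactly where the hypothesis that $K(G_1),K(G_2)$ are not complete is used, cf.\ also Theorem \ref{cgt2}), and the map $Q\mapsto A_Q$ is injective since $V(Q)=\{X:X+Y_1\in A_Q\}$ is recoverable from $A_Q$. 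Since $V(K^2(G_1))$ is the set of cliques of $K(G_1)$ and likewise for $G_2$, these correspondences assemble into a bijection $\varphi\colon V(K^2(G))\to V(K^2(G_1))\cup V(K^2(G_2))$ (a disjoint union). It then remains to show $\varphi$ is a graph isomorphism onto $K^2(G_1)+K^2(G_2)$.

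First I would record the adjacency rule in $K(G)$: the vertex set of the clique $X_i+Y_j$ is the disjoint union of $V(X_i)$ and $V(Y_j)$, so $(X_i+Y_j)\cap(X_k+Y_l)=(X_i\cap X_k)\cup(Y_j\cap Y_l)$, and hence $X_i+Y_j$ and $X_k+Y_l$ are equal or adjacent in $K(G)$ iff $X_i\cap X_k\neq\emptyset$ or $Y_j\cap Y_l\neq\emptyset$. Then come the three adjacency checks. (i) For cliques $Q,Q'$ of $K(G_1)$, any common element of $A_Q$ and $A_{Q'}$ has the form $X+Y_j$ with $X\in V(Q)\cap V(Q')$ (an equality $X_{Q_i}+Y_j=X_{Q'_{i'}}+Y_{j'}$ forces $X_{Q_i}=X_{Q'_{i'}}$ and $Y_j=Y_{j'}$); conversely if $X\in V(Q)\cap V(Q')$ then $X+Y_1\in A_Q\cap A_{Q'}$. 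So $A_Q\cap A_{Q'}\neq\emptyset$ iff $V(Q)\cap V(Q')\neq\emptyset$, i.e.\ iff $Q=Q'$ or $Q,Q'$ are adjacent in $K^2(G_1)$. (ii) The same argument with the two factors interchanged handles $B_R,B_{R'}$ and $K^2(G_2)$. (iii) For a clique $Q$ of $K(G_1)$ and a clique $R$ of $K(G_2)$, choose any $X\in V(Q)$ and $Y\in V(R)$; then $X+Y\in A_Q$ (since $X\in V(Q)$ and $Y$ is a clique of $G_2$) and $X+Y\in B_R$ (since $X$ is a clique of $G_1$ and $Y\in V(R)$), so $A_Q\cap B_R\neq\emptyset$ always.

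Combining (i)--(iii): $\varphi$ preserves and reflects adjacency, carries every $A$-vertex into $V(K^2(G_1))$ and every $B$-vertex into $V(K^2(G_2))$, restricts to the identity-type correspondence on each part, and makes every $A$--$B$ pair adjacent. That is precisely the adjacency structure of the join $K^2(G_1)+K^2(G_2)$, so $K^2(G)\cong K^2(G_1)+K^2(G_2)$.

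I expect the only real friction to be the bookkeeping across the three layers — vertices of $G$, cliques of $G$ (the vertices of $K(G)$), and cliques of $K(G)$ (the vertices of $K^2(G)$) — and making sure the $Q\mapsto A_Q$, $R\mapsto B_R$ correspondences from Lemmas \ref{cgl4}--\ref{cgl5} are genuinely a well-defined bijection before invoking it. Once that is nailed down, parts (i)--(iii) are short, purely set-theoretic verifications with no further obstacle.
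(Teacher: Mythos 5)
Your proposal is correct and follows essentially the same route as the paper: it uses the correspondences from Lemmas \ref{cgl4} and \ref{cgl5} to set up the bijection between cliques of $K(G)$ and cliques of $K(G_1)$ together with cliques of $K(G_2)$, and then your checks (i)--(iii) are exactly the paper's Claim 1 (and its symmetric counterpart) and Claim 2. Your version is slightly more explicit about well-definedness and injectivity of $Q\mapsto A_Q$ and about the intersection rule $(X_i+Y_j)\cap(X_k+Y_l)=(X_i\cap X_k)\cup(Y_j\cap Y_l)$, but the argument is the same.
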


\begin{proof}
Let $G=G_1 + G_2$ be a graph such that $K(G_1)$ and $K(G_2)$ are not complete. Let $X_1, X_2, \dots, X_n$ be the cliques of $K(G_1)$, and $Y_1, Y_2, \dots Y_m$ be the cliques of $K(G_2)$. By Lemma \ref{cgl6}, there are $(n+m)$ cliques in $K(G)$. By Lemma \ref{cgl4} it follows that for every clique $X_i$ of $K(G_1)$ there is a clique $X_i'$ in $K(G)$, $1\leq i \leq n$ and for every clique $Y_j$ of $K(G_2)$ there is a clique $Y_j'$ in $K(G)$, $1 \leq j \leq m$.

Claim 1: $X_i \cap X_j \neq \emptyset$ in $K(G_1)$ if and only if $X_i' \cap X_j' \neq \emptyset$ in $K(G)$ for $i \neq j$.

Let $X_i, X_j$ be two cliques  in $K(G_1)$ and $X_i \cap X_j \neq \emptyset$. Let $v$ be a vertex in $X_i \cap X_j$. By Lemma \ref{cgl4} it follows that if $v$ is a vertex in the clique $X_i$ in $K(G_1)$, then for any vertex $u$ in $K(G_2)$, $v+u$ is a vertex in the clique $X_i'$ in $K(G)$ corresponding to the clique $X_i$ in $K(G_1)$. Therefore $v+u$ is a vertex in $X_i' \cap X_j'$.

Conversely, suppose that $X_i', X_j'$ be two cliques in $K(G)$ and $X_i' \cap X_j' \neq \emptyset$. Let $w$ be a vertex in $X_i' \cap X_j'$. By Lemma \ref{cgl5} it follows that $w = v + u$, where $v$ is a vertex of $K(G_1)$ and $u$ is a vertex of $K(G_2)$. Since $w = v + u$ is a vertex of the clique $X_i'$ in $K(G)$, it follows that $v$ is a vertex of the clique $X_i$ in $K(G_1)$. Similarly $v$ is a vertex of the clique $X_j$ in $K(G_1)$. Therefore $v$ is in $X_i \cap X_j$.

Similarly we can prove that, $Y_i \cap Y_j \neq \emptyset$ in $K(G_2)$ if and only if $Y_i' \cap Y_j' \neq \emptyset$ in $K(G)$ for $i \neq j$.

Claim 2: $X_i' \cap Y_j' \neq \emptyset$ in $K(G)$ for $1\leq i \leq n$, $1 \leq j \leq m$.

Let $X_i', Y_j'$ be two cliques in $K(G)$, $1\leq i \leq n$, $1 \leq j \leq m$ and $X_i, Y_j$ are the cliques in $K(G_1), K(G_2)$ corresponding to the maximal cliques $X_i', Y_j'$ in $K(G)$ respectively. Let $v$ be a vertex in $X_i$ and $u$ be a vertex in $Y_j$, then by Lemma \ref {cgl4} $v+u$ be the vertex in $X_i'$ as well as in $Y_j'$. Therefore $X_i' \cap Y_j' \neq \emptyset$.

Since  cliques of $K(G)$, $K(G_1)$ and $K(G_2)$ are the vertices of $K^2(G)$, $K^2(G_1)$ and $K^2(G_2)$ respectively, by claims 1 and 2 it follows that $K^2(G)$ is the same as $K^2(G_1)+K^2(G_2)$. 
\end{proof}

Let $G_1$, $G_2$ be two graphs, $G=G_1+G_2$ and $K^n(G_1)$, $K^m(G_2)$ are not complete for any $n, m$ in $\mathbb{N}$. Since $K^n(G_1)$, $K^m(G_2)$ are not complete for any $n, m$ in $\mathbb{N}$, $K(G_1)$, $K(G_2)$ are not complete. By Theorem \ref{cgt3}, $K^2(G) = K^2(G_1)+K^2(G_2)$. Since $K^n(G_1)$, $K^m(G_2)$ are not complete for any $n, m$ in $\mathbb{N}$, $K^3(G_1)=K(K^2(G_1))$, $K^3(G_2)=K(K^2(G_2))$ are not complete. Hence by Theorem \ref{cgt3} it follows that $K^2(K^2(G)) = K^2(K^2(G_1))+K^2(K^2(G_2))$. i.e., $K^4(G) = K^4(G_1)+K^4(G_2)$. Proceeding like this we get $K^{2n}(G) = K^{2n}(G_1)+K^{2n}(G_2)$ for any $n$ in $\mathbb{N}$.

\begin{theorem}\label{cgt4}
Let $G_1$, $G_2$ be two graphs and $G=G_1+G_2$. If $K^n(G_1)$, $K^m(G_2)$ are not complete for any $n, m$ in $\mathbb{N}$, then $G$ is $K$-convergent if and only if $G_1, G_2 $ are $K$-convergent.
\end{theorem}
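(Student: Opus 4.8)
The plan is to reduce the whole statement to the identity $K^{2n}(G) = K^{2n}(G_1) + K^{2n}(G_2)$, valid for every $n \in \mathbb{N}$ under the present hypothesis (this is exactly the consequence of Theorem \ref{cgt3} derived in the paragraph preceding the statement: since no iterate of $G_1$ or $G_2$ is complete, Theorem \ref{cgt3} applies to $G$, then to $K^2(G)$, and so on). The first thing I would record is that $G$ is $K$-convergent if and only if the family of \emph{even} iterates $\{K^{2n}(G) : n \in \mathbb{N}\}$ is finite: the even iterates form a subfamily of all iterates, and conversely $K^{2n+1}(G) = K\bigl(K^{2n}(G)\bigr)$, so the odd iterates are the image of the even ones under the operator $K$ and hence also finite, and a union of two finite families is finite. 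The same remark applies verbatim to $G_1$ and to $G_2$. So it suffices to show that $\{K^{2n}(G):n\}$ is finite if and only if both $\{K^{2n}(G_1):n\}$ and $\{K^{2n}(G_2):n\}$ are finite.

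For the ``if'' direction, I would assume $G_1$ and $G_2$ are $K$-convergent, so that $\mathcal{S}_1 = \{K^{2n}(G_1):n\}$ and $\mathcal{S}_2 = \{K^{2n}(G_2):n\}$ are finite sets of graphs. By the identity, every even iterate of $G$ has the form $A+B$ with $A \in \mathcal{S}_1$ and $B \in \mathcal{S}_2$, so $\{K^{2n}(G):n\}$ is contained in $\{A+B : A \in \mathcal{S}_1,\, B\in\mathcal{S}_2\}$, a set of at most $|\mathcal{S}_1|\cdot|\mathcal{S}_2|$ graphs. Hence $\{K^{2n}(G):n\}$ is finite, and by the reduction above $G$ is $K$-convergent.

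For the ``only if'' direction, I would assume $G$ is $K$-convergent, so $\{K^{2n}(G):n\}$ is finite; then the set of vertex-numbers $\{\,|V(K^{2n}(G))| : n \in \mathbb{N}\,\}$ is finite, say bounded by $N$. Since the join of two graphs has vertex set the disjoint union of the two vertex sets, the identity yields $|V(K^{2n}(G_1))| + |V(K^{2n}(G_2))| = |V(K^{2n}(G))| \le N$ for every $n$; as each summand is a positive integer, $|V(K^{2n}(G_1))| \le N$ and $|V(K^{2n}(G_2))| \le N$ for all $n$. Thus every graph in $\{K^{2n}(G_1):n\}$ has at most $N$ vertices, and since there are only finitely many graphs on at most $N$ vertices up to isomorphism, $\{K^{2n}(G_1):n\}$ is finite; likewise $\{K^{2n}(G_2):n\}$ is finite. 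By the reduction, $G_1$ and $G_2$ are $K$-convergent.

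Most of the labour here is already absorbed into Theorem \ref{cgt3} and its iteration, so the argument is short. The one place that needs care is the ``only if'' direction: one cannot simply ``read off'' $K^{2n}(G_1)$ from $K^{2n}(G)$, because a graph can be written as a join in many inequivalent ways, so instead I pass to vertex counts, where the join identity becomes a clean additive relation, and then invoke that a bound on the number of vertices forces a bound on the number of isomorphism types. A secondary bookkeeping point, used in both directions, is that controlling the even iterates controls all iterates, because a single further application of $K$ passes from even to odd indices.
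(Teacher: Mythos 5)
Your proposal is correct, and its ``if'' direction (convergence of $G_1$ and $G_2$ implies convergence of $G$) coincides with the paper's: both rest on the iterated identity $K^{2n}(G)=K^{2n}(G_1)+K^{2n}(G_2)$ and the observation that the even iterates of $G$ then range over the finite set of joins $A+B$ with $A$, $B$ drawn from the finitely many iterates of $G_1$, $G_2$. Where you genuinely diverge is the ``only if'' direction. The paper argues by contradiction: it assumes $G$ convergent and $G_1$, $G_2$ not convergent, and then simply asserts that $K^{2n}(G)$ is ``not convergent'' --- which is essentially a restatement of the claim to be proved and is left unjustified (it also negates the conjunction loosely, treating ``not both convergent'' as ``both not convergent''). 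Your route via vertex counts, $|V(K^{2n}(G))|=|V(K^{2n}(G_1))|+|V(K^{2n}(G_2))|$, so that a bound on the left forces a bound on each summand and hence only finitely many isomorphism types for each factor, is a complete argument and in fact repairs the weak point in the published proof; the only price is the restriction to finite graphs (needed for ``finitely many graphs on at most $N$ vertices''), which is standard in the clique-convergence literature and implicitly assumed by the paper as well despite its ``not necessarily finite'' caveat in the introduction. Your preliminary reduction --- that finiteness of the even iterates suffices because the odd iterates are their image under $K$ --- is a slightly cleaner version of the paper's remark that a repetition $K^{2n}(G)=K^{2m}(G)$ with $m<n$ forces the whole orbit to be finite; both are fine.
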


\begin{proof}
Let $G=G_1 + G_2$ be a graph such that $K^n(G_1)$ and $K^m(G_2)$ are not complete for any $n, m$ in $\mathbb{N}$. 

Suppose $G$ is $K$-convergent and $G_1, G_2 $ are not $K$-convergent. By Theorem \ref{cgt3} it follows that $K^{2n}(G) = K^{2n}(G_1)+K^{2n}(G_2)$ for any $n$ in $\mathbb{N}$. Since $G_1, G_2 $ are not $K$-convergent, by definition of convergence, $K^{2n}(G_1)$ and $K^{2n}(G_2)$ are also not $K$-convergent for any $n$ in $\mathbb{N}$. Therefore $K^{2n}(G)$ is not convergent for any $n$ in $\mathbb{N}$ which is a contradiction, as if $G$ is convergent, then $K^n(G)$ is also convergent for any $n$ in $\mathbb{N}$.

Conversely, suppose that $G_1, G_2 $ are $K$-convergent. By Theorem \ref{cgt3} it follows that $K^{2n}(G) = K^{2n}(G_1)+K^{2n}(G_2)$ for any $n$ in $\mathbb{N}$. Since $G_1, G_2 $ are $K$-convergent, by definition of convergence, the sets $\{K^n(G_1) : n \in \mathbb{N} \}$, $\{K^m(G_2) : m \in \mathbb{N} \}$ are finite, which implies that the set $\{K^{2n}(G)=K^{2n}(G_1)+K^{2n}(G_2) : n \in \mathbb{N} \}$ is also finite. i.e., there exists an $n$ in $\mathbb{N}$ such that $K^{2n}(G)=K^{2m}(G)$ for some $m < n$, which implies that the set $\{K^n(G) : n \in \mathbb{N} \}$ is also finite. Therefore $G$ is $K$-convergent.
\end{proof}

\begin{theorem}\label{cgt5}
Let $G_1$ , $G_2$ be two graphs and $G=G_1 + G_2$. If $K^n(G_1)$ is complete for some $n$ in $\mathbb{N}$, then $G$ is $K$-convergent.
\end{theorem}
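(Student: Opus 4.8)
The plan is to reduce the statement to showing that some iterated clique graph $K^{T}(G)$ is a complete graph, and then to exploit the fact that once an iterate is complete the whole sequence of iterates stabilises. Indeed, if a graph $H$ is a complete graph $K_{p}$ with $p\ge 1$, then its only clique is $V(H)$ itself, so $K(H)=K_{1}$, and $K(K_{1})=K_{1}$; hence if $K^{T}(G)$ is complete for some $T\in\mathbb{N}$ then $K^{T+s}(G)=K_{1}$ for all $s\ge 1$, so $\{K^{n}(G):n\in\mathbb{N}\}$ is finite and $G$ is $K$-convergent. The same remark shows that the property ``$K^{i}(H)$ is complete'' is upward closed in $i$. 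So it suffices to produce one index $T$ with $K^{T}(G)$ complete.

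Next I would set $j$ to be the least index $i\ge 0$ for which $K^{i}(G_{1})$ or $K^{i}(G_{2})$ is complete. By hypothesis $K^{n}(G_{1})$ is complete, so $j\le n<\infty$; and by minimality, for every $i<j$ both $K^{i}(G_{1})$ and $K^{i}(G_{2})$ are non-complete. Write $j=2k$ or $j=2k+1$, so that $2k\le j$. Applying Theorem~\ref{cgt3} iteratively — exactly as in the paragraph following its proof, but only finitely often — to the pairs $(K^{2r}(G_{1}),K^{2r}(G_{2}))$ for $r=0,1,\dots,k-1$: each step is legitimate because the iterates $K^{2r+1}(G_{1})$ and $K^{2r+1}(G_{2})$ that must be non-complete have index $2r+1\le 2k-1<j$. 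This yields $K^{2k}(G)=K^{2k}(G_{1})+K^{2k}(G_{2})$, which holds trivially when $k=0$.

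Finally I would apply Theorem~\ref{cgt2} to the two graphs $K^{2k}(G_{1})$ and $K^{2k}(G_{2})$, whose join is $K^{2k}(G)$: it gives that $K^{2k+1}(G)=K(K^{2k}(G))$ is complete if and only if $K^{2k+1}(G_{1})$ or $K^{2k+1}(G_{2})$ is complete. If $j=2k+1$, this last disjunction holds by the choice of $j$, so $K^{j}(G)=K^{2k+1}(G)$ is complete. If $j=2k$, then $K^{j}(G_{1})$ or $K^{j}(G_{2})$ is complete, hence by upward closedness $K^{2k+1}(G_{1})=K^{j+1}(G_{1})$ or $K^{2k+1}(G_{2})=K^{j+1}(G_{2})$ is complete, so again $K^{2k+1}(G)=K^{j+1}(G)$ is complete. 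In both cases $K^{T}(G)$ is complete for some $T\le j+1<\infty$, and the first paragraph finishes the proof.

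The main obstacle I anticipate is purely bookkeeping rather than anything deep: Theorem~\ref{cgt3} only decomposes \emph{even} iterates $K^{2k}(G)$ and requires the intermediate iterates of $G_{1}$ and $G_{2}$ to be non-complete, so one has to pick the correct even index near $j$ and check that the finitely many applications are all valid. Choosing $j$ as the least ``bad'' index over \emph{both} $G_{1}$ and $G_{2}$ (rather than just the least $n$ for $G_{1}$) is what keeps this clean and also automatically covers the case in which $G_{2}$ reaches a complete iterate before $G_{1}$ does; no graph-theoretic input beyond Theorems~\ref{cgt2} and~\ref{cgt3} and the identity $K(K_{p})=K_{1}$ should be needed.
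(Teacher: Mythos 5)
Your proof is correct and follows essentially the same route as the paper: reduce to showing that some iterate $K^{T}(G)$ is complete, using Theorem~\ref{cgt3} to propagate the join decomposition to an even level and Theorem~\ref{cgt2} to convert ``some factor has a complete clique graph'' into ``$K$ of the join is complete,'' then observe that a complete iterate forces all later iterates to be $K_1$. The one genuine difference is that you do the bookkeeping the paper skips: the paper asserts $K^{2n}(G)=K^{2n}(G_1)+K^{2n}(G_2)$ ``for any $n$'' by Theorem~\ref{cgt3} without checking that theorem's hypothesis, which fails as soon as some intermediate iterate of $G_1$ or $G_2$ becomes complete --- and the hypothesis of the present theorem guarantees this eventually happens (and says nothing about $G_2$ at all). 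Your device of taking $j$ to be the least index at which either factor's iterate is complete, iterating Theorem~\ref{cgt3} only up to the largest even level below $j$, and then invoking Theorem~\ref{cgt2} once, is exactly what is needed to make the argument airtight; it also handles the case where $G_2$ reaches a complete iterate before $G_1$ does. So your write-up is not a different proof so much as a repaired version of the paper's, and I would keep your version.
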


\begin{proof}
Let $G=G_1 + G_2$ be a graph. Suppose that $K^n(G_1)$ is complete for some $n$ in $\mathbb{N}$. By Theorem \ref{cgt3} it follows that $K^{2n}(G) = K^{2n}(G_1)+K^{2n}(G_2)$ for any $n$ in $\mathbb{N}$. If $n$ is even, it follows that $K^{n}(G) = K^{n}(G_1)+K^{n}(G_2)$. Since $K(K^n(G_1)) = K_1$ is complete, by Theorem \ref{cgt2} it follows that $K(K^{n}(G))$ is also complete. If $n$ is odd, then $n+1$ is even, therefore $K^{n+1}(G) = K^{n+1}(G_1)+K^{n+1}(G_2)$. Since $K^n(G_1)$ is complete, for any $m >n$, $K^{m}(G_1) =K_1$ is complete. By Theorem \ref{cgt2} it follows that $K(K^{n+1}(G))$ is also complete. By the definition of clique convergence it follows that $G$ is $K$-convergent.
\end{proof}

\begin{theorem}\label{cgt10}
Let $G_1$, $G_2$ be $K$-periodic graphs. If $G = G_1 + G_2$, then $G$ is $K$-periodic.
\end{theorem}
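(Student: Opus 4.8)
The plan is to run the iterated form of Theorem~\ref{cgt3} and then exploit a common multiple of the two periods. Recall that, as spelled out in the paragraph preceding Theorem~\ref{cgt4}, once one knows that \emph{no} iterate $K^{i}(G_1)$ or $K^{i}(G_2)$ is a complete graph, Theorem~\ref{cgt3} iterates to give
\[
K^{2n}(G)=K^{2n}(G_1)+K^{2n}(G_2)\qquad\text{for all }n\in\mathbb N .
\]
So the first task is to check this proviso. If $H$ is $K$-periodic with $t:=K$-per$(H)$, then $K^{2t}(H)=K^{t}(K^{t}(H))\cong K^{t}(H)\cong H$, and by induction $K^{jt}(H)\cong H$ for every $j\ge 1$ (using that $A\cong B$ implies $K^{s}(A)\cong K^{s}(B)$). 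Hence if some $K^{i}(H)$ were complete we would get $K^{i+1}(H)=K_1$, so $K^{m}(H)=K_1$ for all $m\ge i+1$, and choosing $j$ with $jt\ge i+1$ would force $H\cong K_1$. Therefore a $K$-periodic graph different from $K_1$ has no complete iterate. Since $K_1+K_1=K_2$ is not $K$-periodic, the statement is to be read with $G_1,G_2\not\cong K_1$, and then the proviso holds for both factors.

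With the proviso in place, set $p_i=K$-per$(G_i)$ for $i=1,2$ and $p=\operatorname{lcm}(p_1,p_2)$. Since $2p$ is a multiple of both $p_1$ and $p_2$, the observation above gives $K^{2p}(G_1)\cong G_1$ and $K^{2p}(G_2)\cong G_2$. Substituting $n=p$ into the displayed identity,
\[
K^{2p}(G)=K^{2p}(G_1)+K^{2p}(G_2)\cong G_1+G_2=G ,
\]
where the last isomorphism holds because a join $A+B$ is determined up to isomorphism by the isomorphism types of $A$ and $B$ (all edges between the two parts are present regardless). As $2p\ge 1$, this exhibits a positive integer $n$ with $G\cong K^{n}(G)$, so $G$ is $K$-periodic, with $K$-per$(G)\le 2p$.

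The only real obstacle is the bookkeeping around completeness of iterates: Theorem~\ref{cgt3} and its iteration collapse as soon as some $K^{i}(G_1)$ or $K^{i}(G_2)$ becomes complete, so one must first rule this out — which is exactly the ``collapse to $K_1$'' argument above, using $K$-periodicity of the two factors (and excluding the trivial case $G_j\cong K_1$). Everything after that — producing the common period $2p$, and invoking compatibility of the clique operator $K$ and of the join $+$ with graph isomorphism — is routine.
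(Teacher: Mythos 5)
Your proof is correct and follows essentially the same route as the paper: iterate Theorem~\ref{cgt3} to obtain $K^{2N}(G)=K^{2N}(G_1)+K^{2N}(G_2)$ for an even common multiple $2N$ of the two periods (the paper takes $2N=2nm$, you take $2\operatorname{lcm}(p_1,p_2)$) and conclude $K^{2N}(G)\cong G$. You are more careful than the paper on one point: the paper merely asserts that no iterate of a $K$-periodic graph is complete, which fails for $K_1$ (and indeed the theorem as stated fails when $G_1\cong K_1$, since then $G$ has a universal vertex, $K(G)$ is complete, and the iterates collapse to $K_1$), so your explicit exclusion of that case and your proof of the non-completeness claim fill a genuine gap in the paper's argument.
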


\begin{proof}
Let $G = G_1 + G_2$ where $G_1$, $G_2$ are $K$-periodic graphs of periods $n, m$ respectively. Since $G_1, G_2$ are $K$-periodic, neither $K^i(G_1)$ nor $K^j(G_2)$ are complete for any $i, j$. By Theorem \ref{cgt3} it follows that 
\begin{eqnarray*}
K^{2nm}(G) &=& K^{2nm}(G_1) + K^{2nm}(G_2) \\&=& G_1+G_2\\& =& G
\end{eqnarray*}
Therefore $G$ is $K$-periodic.
\end{proof}

\subsection{Observations}

Let $G=G_1+G_2$ be a graph and $\mathcal{K}_{G_1}=\{X_1, X_2, \dots, X_n\}$ be the set of all cliques of $G_1$ and $\mathcal{K}_{G_2}=\{Y_1, Y_2, \dots, Y_m\}$ be the set of all cliques of $G_2$. By Theorem \ref{cgl1}, it follows that $\mathcal{K}_G = \{ Q_{ij} = X_i + Y_j : 1 \leq i \leq n; 1 \leq j \leq m \}$ is the set of all cliques of $G$. Let $v_{ij}$ be the vertex of $K(G)$ corresponding to the clique $Q_{ij}$ of $G$. Arrange the vertices of $K(G)$ as a matrix $M = [m_{ij}]$, where $m_{ij} = v_{ij}$, i.e.,

M=$\left(
  \begin{array}{ccccc}
    v_{11} & v_{12} & v_{13} & \dots & v_{1m} \\
    v_{21} & v_{22} & v_{23} & \dots & v_{2m} \\
    \vdots & \vdots & \vdots & \ddots & \vdots \\
    v_{n1} & v_{n2} & v_{n3} & \dots & v_{nm} \\
  \end{array}
\right)$.

From the above matrix one can observe that the $i^{th}$ row corresponds to the clique $X_i$ of $G_1$ and $j^{th}$ column corresponds to the clique $Y_j$ of $G_2$, $1\leq i \leq n$, $1 \leq j \leq m$. 

Claim 1: Any two elements in the same row or same column in $M$ are adjacent in $K(G)$.

Let $Q_{ij}$, $Q_{ik}$ be any two elements in the $i^{th}$ row. Since $Q_{ij}=X_i+Y_j$, $Q_{ik}=X_i+Y_k$, $Q_{ij} \cap Q_{ik} = X_i \neq \emptyset$. Therefore $Q_{ij}$, $Q_{ik}$ are adjacent in $K(G)$. Similarly any two elements in the same column are adjacent.

Claim 2: If $X_i \cap X_j \neq \emptyset$, then every vertex of $i^{th}$ row is adjacent to every vertex of $j^{th}$ row, $1\leq i \neq j \leq n$.

Let $X_i \cap X_j \neq \emptyset$ and $v_{ik}$, $v_{jl}$ be any two elements of $i^{th}$ and $j^{th}$ rows respectively in $M$. Since $Q_{ik}=X_i+Y_k$, $Q_{jl}=X_j+Y_l$ are the cliques of $G$ corresponding to the vertices $v_{ik}$, $v_{jl}$ of $K(G)$ and $X_i \cap X_j \neq \emptyset$, we have $Q_{ik} \cap Q_{jl} \neq \emptyset$. Therefore $v_{ik}$, $v_{jl}$ are adjacent in $K(G)$.

Similarly if $Y_i \cap Y_j \neq \emptyset$, then every vertex of $i^{th}$ column is adjacent to every vertex of $j^{th}$ column, $1\leq i \neq j \leq m$.

One can see that the following observations will follow from Case 1 and Case 2.

\noindent 1. If $G=G_1 + G_2$, then $K(G)$ is Hamiltonian.

\noindent 2. If $G=G_1 + G_2$, then $K(G)$ is planar if it satisfies one of the following:

i). The number of cliques in $G_1$ and $G_2$ is less than $3$.

ii). If the number of cliques in $G_1$ is $3$, then either $G_2$ is a complete graph or $G_2$ has exactly two cliques and $K(G_1) = \overline{K_3}$, $K(G_2) = \overline{K_2}$.

iii). If the number of cliques in $G_1$ is $4$, then $G_2$ is a complete graph.

\noindent 3. If $G=G_1 + G_2$ and $n, m$ are the number of cliques in $G_1$, $G_2$, then the degree of any vertex in $K(G)$ is either $(n+m-2)+k(n-1)$, or $(n+m-2)+l(m-1)$, $0 \leq k \leq m$ and $0 \leq l \leq n$.

\noindent 4. Let $G_1$, $G_2$ be two graphs and $G= G_1+G_2$,

i) If both $G_1$ and $G_2$ have odd number of cliques, then $K(G)$ is Eulerian. 

ii) If both $G_1$ and $G_2$ have even number of cliques, then $K(G)$ is Eulerian if $K(G_1)$, $K(G_2)$ are Eulerian. 

iii) If $G_1$ has even number of cliques and $G_2$ has odd number of cliques, then $K(G)$ is Eulerian if degree of each vertex in $K(G_1)$ is odd and $K(G_2)$ is totally disconnected.

\section{Cartesian product of graphs}
\begin{theorem}\label{cgt6}
If $G_1$, $G_2$ are Clique-Helly graphs different from $K_1$ and $G=G_1 \Box G_2$, then $K^2(G) = G$.
\end{theorem}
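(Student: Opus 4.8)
The plan is to determine the cliques of $G=G_1\Box G_2$, show that $G$ is itself Clique-Helly, and then exploit the fact that for a Clique-Helly graph the cliques of its clique graph are exactly the ``stars'' of its vertices, so that passing to $K^2$ merely recovers $G$ provided every vertex is ``essential.'' First I would describe $\mathcal{K}_G$: the point is that every edge of $G_1\Box G_2$ keeps either the first or the second coordinate fixed, and from this one checks that any complete subgraph on at least two vertices lies entirely in a single row $V(G_1)\times\{h\}$ or a single column $\{g\}\times V(G_2)$. Hence --- using that $G_1,G_2\neq K_1$, so that (in the connected case) no vertex is isolated and every clique has size at least $2$ --- every clique of $G$ has exactly one of the two forms $Q\times\{h\}$ with $Q\in\mathcal{K}_{G_1}$, $h\in V(G_2)$ (call it \emph{horizontal}), or $\{g\}\times Q'$ with $g\in V(G_1)$, $Q'\in\mathcal{K}_{G_2}$ (\emph{vertical}); maximality of these sets follows at once from maximality of $Q$, $Q'$.

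Next I would verify that $G$ is Clique-Helly. Let $\mathcal{F}$ be a family of pairwise intersecting cliques of $G$. Two horizontal cliques meet iff they lie in the same row and their $G_1$-parts meet, so if $\mathcal{F}$ consists only of horizontal cliques they all lie in one row and, since $G_1$ is Clique-Helly, $\bigcap\mathcal{F}\neq\emptyset$; the all-vertical case is symmetric. If instead $\mathcal{F}$ contains a horizontal clique $Q_0\times\{h_0\}$ and a vertical clique $\{g_0\}\times Q_0'$, their intersection is the single vertex $(g_0,h_0)$ (so $g_0\in Q_0$ and $h_0\in Q_0'$), and a short case check shows that every clique of $G$ meeting both of them must contain $(g_0,h_0)$; hence $(g_0,h_0)\in\bigcap\mathcal{F}$ in every case.

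Now for any graph $H$ and vertex $v$ set $\widehat v=\{Q\in\mathcal{K}_H:\ v\in Q\}\subseteq V(K(H))$. This $\widehat v$ is always a complete subgraph of $K(H)$, and $\widehat v\cap\widehat w\neq\emptyset$ iff $v$ and $w$ lie in a common clique of $H$, i.e.\ iff $v=w$ or $vw\in E(H)$; moreover, if $H$ is Clique-Helly then the Helly property forces every clique of $K(H)$ to be contained in, hence to equal, some $\widehat v$. So it remains to prove that in $G=G_1\Box G_2$ every star $\widehat{(g,h)}$ is already a clique of $K(G)$ --- equivalently, that no clique $C$ of $G$ with $(g,h)\notin C$ meets every clique of $G$ through $(g,h)$. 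Through $(g,h)$ there is both a horizontal clique (take any clique of $G_1$ through $g$) and a vertical clique of size at least $2$ (take a clique of $G_2$ through $h$; this is exactly where $G_2\neq K_1$ enters, and symmetrically $G_1\neq K_1$). If $C$ is horizontal, meeting every horizontal clique through $(g,h)$ forces $C$ into row $h$, whence $(g,h)\notin C$ forces the $G_1$-part of $C$ to omit $g$, so $C$ cannot meet the vertical clique through $(g,h)$ --- a contradiction; the vertical case is symmetric. One also checks the stars $\widehat{(g,h)}$ are pairwise distinct: two distinct vertices differ in a coordinate, and then some vertical, respectively horizontal, clique through one of them does not pass through the other.

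Combining: the vertices of $K^2(G)=K(K(G))$ are the cliques of $K(G)$, which by the previous two steps are precisely the distinct stars $\widehat{(g,h)}$, one for each vertex of $G$; and $\widehat u$ is adjacent to $\widehat w$ in $K^2(G)$ iff $\widehat u\cap\widehat w\neq\emptyset$ iff $u$ and $w$ lie in a common clique of $G$ iff $u\sim w$. Hence $u\mapsto\widehat u$ is a graph isomorphism $G\to K^2(G)$, so $K^2(G)=G$. I expect the essentiality step (every vertex's star is already a clique of $K(G)$) to be the main obstacle: it is where the Cartesian-product geometry and the hypothesis ``$\neq K_1$'' genuinely enter, and it demands the careful horizontal/vertical case analysis; the Clique-Helly verification is routine but equally indispensable, and one should stay mildly alert to isolated vertices, which I would rule out by working in the connected case or dispatch with a short side remark.
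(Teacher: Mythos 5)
Your proposal is correct, and its overall architecture matches the paper's: identify the cliques of $G_1\Box G_2$ as the row-cliques and column-cliques, show that the cliques of $K(G)$ are exactly the vertex stars $\widehat{(g,h)}$, and check that the star map preserves and reflects adjacency. The one genuine difference lies in the middle step. The paper proves directly (its Claim 2) that the total intersection of any clique of $K(G)$ is a single vertex: it applies the Helly property inside one row, then uses a crossing column-clique to force the intersection down to the unique point where that row and column meet. You instead first prove that $G$ itself is Clique-Helly and then invoke the general fact that in a Clique-Helly graph every clique of $K(H)$ is a vertex star. Your route is slightly longer but more modular, and it yields the paper's Corollary 3.2(i) (that $G$ is Clique-Helly) as a byproduct rather than as an afterthought. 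Your unease about isolated vertices is also justified, and cannot be dispatched by a mere side remark: if $G_1=K_1\cup K_2$ (disjoint union) and $G_2=K_2$, then both factors are Clique-Helly and different from $K_1$, yet $G=K_2\cup C_4$ while $K^2(G)=K_1\cup C_4\neq G$. So the statement genuinely requires that no vertex of either factor be isolated (for instance by assuming connectivity), a hypothesis the paper uses tacitly when it assumes every vertex of $G$ lies on both a row-clique and a column-clique that are nontrivial. Apart from making that hypothesis explicit, nothing in your argument needs repair.
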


\begin{proof}
Let $G_1$, $G_2$ be Clique-Helly graphs different from $K_1$ and $G=G_1 \Box G_2$.
Let $V(G_1)=\{v_1, v_2, \dots v_{n_1}\}$ and $V(G_2)=\{u_1, u_2, \dots u_{n_2}\}$, then by the definition of $G_1 \Box G_2$, it follows that $V(G) = \{V_{ij} : V_{ij}=(v_i, u_j) ${ where }$ 1\leq i \leq n_1, 1\leq j \leq n_2\}$, $|V(G)|=n_1n_2$. Also, $G$ has $n_2$ copies of $G_1$ (say, $G^1_1, G^2_1, \dots, G^{n_2}_1$) are vertex disjoint induced subgraphs  and $n_1$ copies of $G_2$ (say, $G^1_2, G^2_2, \dots, G^{n_1}_2$) are vertex disjoint induced subgraphs. Clearly one can observe that $V(G^i_2) \cap V(G^j_1) = V_{ij}$, $V_{ij}$ is not in $V(G^n_2)$ and $V(G^m_1)$ for $n\neq i$, $m\neq j$ for all $1\leq i \leq n_1$, $1\leq j \leq n_2$. As $G = G_1 \Box G_2$, we can see that every clique in $G_1$ and $G_2$ are cliques in $G$. Let $\mathcal{K}_{G_1} = \{Q_1, Q_2, \dots, Q_{l_1}\}$ and $\mathcal{K}_{G_2} = \{P_1, P_2, \dots, P_{l_2}\}$, then 

$\mathcal{K}_{G} = \{Q^1_1, Q^1_2, \dots, Q^1_{l_1}, Q^2_1, Q^2_2, \dots, Q^2_{l_1}, \dots Q^{n_2}_1, Q^{n_2}_2, \dots, Q^{n_2}_{l_1},\\ P^1_1, P^1_2, \dots, P^1_{l_2}, P^2_1, P^2_2, \dots, P^2_{l_2}, \dots, P^{n_1}_1, P^{n_1}_2, \dots, P^{n_1}_{l_2}\}$.

Claim 1: For every vertex $V_{ij}$ in $G$ there is a clique in $K(G)$.

Let $V_{ij}$ be a vertex in $G$ for some $i,j$, $1\leq i \leq n_1, 1 \leq j \leq n_2$. Define $A_{ij} = \{Q: V_{ij} \in Q\} \subseteq \mathcal{K}_{G}$. Clearly intersection of any two cliques in $A_{ij}$ is non empty. Therefore the vertices corresponding to these cliques in $K(G)$ form a complete subgraph in $K(G)$. Suppose it is not a maximal complete subgraph in $K(G)$, then there exists a vertex $V$ in $K(G)$ such that $V$ is adjacent to all the vertices of $A_{ij}$. Let $Q_V$ be the clique in $G$ corresponding to the vertex $V$ in $K(G)$. Clearly $V_{ij}$ is not in $Q_V$. Since every clique in $G$ is either a clique in $G_1$ or a clique in $G_2$, assume that $Q_V$ is a clique in $G^j_1$. Let $Q$ be a clique in $G^i_2$ having the vertex $V_{ij}$, then $Q$ is in $A_{ij}$. Since $V(G^i_2) \cap V(G^j_1) = V_{ij}$, $Q$ is a clique in $G^i_2$ and $V_{ij} \in V(Q)$ and $V(Q) \cap V(G^j_1) = V_{ij}$. Which implies that $V(Q) \cap (V(G^j_1) \setminus \{V_{ij}\}) = \emptyset$. Since $V_{ij}$ is not in $Q_V$ and $Q_V$ is a clique in $G^j_1$, $V(Q_V) \subseteq (V(G^j_1)\setminus V_{ij})$. Therefore $V(Q) \cap V(Q_V) = \emptyset$, a contradiction to the fact that $Q_V$ is adjacent to all the vertices of $A_{ij}$ in $K(G)$. Hence the elements of $A_{ij}$ form a clique in $K(G)$.

Claim 2: For any clique $Q$ in $K(G)$, intersection of all the cliques of $G$ corresponding to the vertices of $Q$ is non empty and a singleton.

Let $Q$ be a clique in $K(G)$ and $V(Q) = \{x_1, x_2, \dots x_n\}$. Suppose all $x_k$'s are cliques in $G^j_1$ for some $j$, $1\leq j \leq n_2$, then the intersection of all $x_k$'s is non empty in $G$, where $x_k \in V(Q)$, as $G^j_1$ satisfies clique-helly property. Let $V \in \cap_{x_k \in Q}x_k$, then $V$ is in $G^i_2$ for some $i$, $1\leq i \leq n_1$. Let $P$ be any clique in $G^i_2$  having a vertex $V$, then $P$ intersects with every element of $V(Q)$. Therefore $V(Q)\cup \{P\}$ forms a complete graph in $K(G)$, a contradiction to the assumption that $Q$ is maximal complete subgraph. Thus the elements of $Q$ are the cliques of $G_1$ and cliques of $G_2$. Since $G^j_1$'s are vertex disjoint and $G^i_2$'s are vertex disjoint, any element of $Q$ is either a clique of $G^j_1$ or a clique of $G^i_2$ for some fixed $i, j$, $1\leq i\leq n_1$, $1\leq j \leq n_2$. Let $x_1, x_2, \dots, x_l$ be the cliques of $G^j_1$ and $x_{l+1}, x_{l+2}, \dots, x_{n}$ be the cliques of $G^i_2$. Since $V(G^j_1) \cap V(G^i_2) = V_{ij}$, $x_{l_1} $ is a clique of $ G^j_1$, $x_{l_2}$ is a clique of $G^i_2$ and $V(x_{l_1}) \cap V(x_{l_2}) \neq \emptyset$, $1\leq l_1 \leq l$, $l+1\leq l_2 \leq n$, $V(x_{l_1}) \cap V(x_{l_2}) = V_{ij}$. Which implies that $V_{ij}$ belongs to every $x_k$ in $Q$. Therefore $\cap_{x_k \in Q} x_k =V_{ij}$.

As the cliques of $K(G)$ are the vertices of $K^2(G)$, by Claims 1 and 2 one can see that there is a one to one correspondence between the vertices of $G$ and $K^2(G)$.

Claim 3: Let $U, V$ be any two adjacent vertices in $G$. Then the intersection of the cliques in $K(G)$ corresponding to these vertices is non empty.

Let $U, V$ be any two adjacent vertices in $G$ and $Q_U$, $Q_V$ be the cliques in $K(G)$ corresponding to the vertices $U$, $V$ in $G$ respectively. Since there is an edge between $U$, $V$ in $G$, there exists a clique $Q$ in $G$ such that the vertices $U$, $V$ are in $Q$. By Claims 1 and 2 it follows that the vertices of $Q_U$ in $K(G)$ are the cliques of $G$ having the vertex $U$ in $G$ is in common. Therefore $Q$ is in $V(Q_U)$. Similarly $Q$ is in $V(Q_V)$. Which implies that $Q_U \cap Q_V \neq \emptyset$. Since cliques of $K(G)$ are the vertices of $K^2(G)$, the vertices corresponding to the cliques $Q_U$ and $Q_V$ of $K(G)$ are adjacent in $K^2(G)$.

Claim 4: Let $P$, $Q$ be any two cliques in $K(G)$. If the intersection of $P$ and $Q$ is non empty, then the vertices in $G$ corresponding to these two cliques are adjacent.

Let $P$, $Q$ be any two cliques in $K(G)$, $P \cap Q \neq \emptyset$ and $U$, $V$ be the vertices in $G$ corresponding to the cliques $P$, $Q$ of $K(G)$ respectively. Since $P \cap Q \neq \emptyset$, there exists a vertex $Q_1$ belonging to $V(P) \cap V(Q)$.  By Claims 1 and 2, one can observe that $Q_1$ is a clique in $G$ and $\cap _{P_i\in V(P)}P_i = U$, $\cap _{Q_i\in V(Q)}Q_i = V$. Thus $U$, $V$ belongs to $V(Q_1)$ in $G$. Therefore $U$, $V$ are adjacent in $G$.

 By Claims 3 and 4 it follows that, two vertices are adjacent in $G$ if and only if the corresponding vertices are adjacent $K^2(G)$.

Therefore $K^2(G)$ is the same as $G$, if $G=G_1 \Box G_2$ and $G_1$, $G_2$ are Clique-Helly graphs such that $G_1$, $G_2$ are different from $K_1$.
\end{proof}

\begin{corollary}\label{cgc7}
Let $G_1$, $G_2$ be two graphs and $G=G_1 \Box G_2$. If $G_1$, $G_2$ are Clique-Helly graphs different from $K_1$, then 
\begin{description}
\item[i] $G$ is a Clique-Helly graph.
\item[ii] $G$ is $K$-periodic.
\item[iii] $G$ is $K$-convergent.
\end{description}

\end{corollary}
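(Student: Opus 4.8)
\textbf{Proof proposal for Corollary \ref{cgc7}.}

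The plan is to derive all three items as nearly immediate consequences of Theorem \ref{cgt6}, together with one small observation about what $K^2(G)=G$ forces. First I would prove (ii) and (iii) in one stroke: Theorem \ref{cgt6} gives $K^2(G)\cong G$ directly, so $G$ satisfies $G\cong K^n(G)$ with $n=2$, which is exactly the defining condition in Definition \ref{NCD1} for $G$ to be $K$-periodic (with $K$-per$(G)$ equal to $1$ or $2$). For (iii), $K$-periodicity immediately yields $K$-convergence: the iterated clique graphs are $G, K(G), K^2(G)\cong G, K^3(G)\cong K(G),\dots$, so $\{K^n(G):n\in\mathbb{N}\}$ has at most two isomorphism types and in particular is finite, which is Definition \ref{NCD2}. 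So (ii) $\Rightarrow$ (iii) is a one-line implication and needs no further work.

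The only item with genuine content is (i), that $G=G_1\Box G_2$ is itself Clique-Helly. Here I would invoke a standard fact (attributed to Escalante, or provable directly): if $K^2(H)\cong H$ then $H$ is a clique graph of some graph, namely $H\cong K(K(H))$, so $H$ is in the image of the clique operator. This does not by itself give Clique-Helly, so a cleaner route is needed. The approach I would actually take is to observe that a graph $H$ with $K^2(H)=H$ for which one can moreover track the vertex/clique correspondence explicitly — as the proof of Theorem \ref{cgt6} does — is Clique-Helly whenever the ``root'' structure is. Concretely: in the proof of Theorem \ref{cgt6}, Claim 2 shows that every clique of $K(G)$ corresponds to a single vertex $V_{ij}$ of $G$, and Claims 1--2 set up a bijection between $V(G)$ and the cliques of $K(G)$; dually, cliques of $G$ correspond bijectively to vertices of $K(G)$ via Theorem \ref{cgl1}'s analogue (every clique of $G$ is a clique of some copy $G_1^j$ or $G_2^i$). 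I would use this to argue that a pairwise-intersecting family of cliques of $G$, viewed inside $K(G)$, forms a clique of $K(G)$, hence by Claim 2 has a common vertex; that is precisely the Helly property for $G$.

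Let me spell that last step out, since it is the crux. Let $\mathcal{F}=\{C_1,\dots,C_t\}$ be a family of pairwise-intersecting cliques of $G$. Each $C_r$ is a vertex $w_r$ of $K(G)$, and pairwise intersection means the $w_r$ are pairwise adjacent in $K(G)$, i.e. they span a complete subgraph of $K(G)$; extend it to a clique $Q$ of $K(G)$. By Claim 2 of Theorem \ref{cgt6}, $\bigcap_{x\in V(Q)} x$ is a single vertex $V_{ij}$ of $G$, and since each $w_r\in V(Q)$, this vertex lies in every $C_r$; hence $\bigcap_r C_r \ni V_{ij}\neq\emptyset$. This establishes the Helly property, proving (i).

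The main obstacle I anticipate is purely expository: making sure the bijections from the proof of Theorem \ref{cgt6} are quoted in the exact form needed — in particular that ``cliques of $G$ $\leftrightarrow$ vertices of $K(G)$'' and ``cliques of $K(G)$ $\leftrightarrow$ vertices of $G$'' are both available, and that Claim 2 applies to \emph{any} clique of $K(G)$, not merely those arising in a particular construction. Since Claim 2 is stated for an arbitrary clique $Q$ of $K(G)$, this goes through. No estimates or case analysis are required beyond what Theorem \ref{cgt6} already contains, so the write-up should be short.
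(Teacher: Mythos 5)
Your proposal is correct. The paper in fact states Corollary \ref{cgc7} with no proof at all, so there is nothing to compare against; your write-up supplies exactly the missing argument. Items (ii) and (iii) are, as you say, immediate from $K^2(G)=G$ (Theorem \ref{cgt6}) together with Definitions \ref{NCD1} and \ref{NCD2}, and your derivation of (i) is the only part with real content: a pairwise-intersecting family of cliques of $G$ spans a complete subgraph of $K(G)$, which extends to a clique $Q$ of $K(G)$, and Claim 2 in the proof of Theorem \ref{cgt6} --- which is indeed stated for an arbitrary clique of $K(G)$ and whose own proof uses only the Helly property of the factors $G_1^j$, so there is no circularity --- yields a common vertex $V_{ij}$ of all members of the family. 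The one cosmetic point is that the Helly property should be verified for arbitrary (possibly infinite) pairwise-intersecting families since the paper allows infinite graphs, but your argument goes through verbatim in that generality, the extension to a maximal clique being the same Zorn-type step the paper already uses implicitly whenever it speaks of cliques.
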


\bibliographystyle{plain}
\bibliography{mydatabase}

\end{document}